\author{Karthik Chikmagalur\thanks{Department of Mechanical Engineering, University of California Santa Barbara(\email{kchikmagalur@ucsb.edu})} \and Bassam Bamieh\thanks{Department of Mechanical Engineering, University of California Santa Barbara(\email{bamieh@ucsb.edu})}}
\DeclareMathOperator{\tr}{Tr}
\definecolor{darkgreen}{HTML}{005f5f}
\definecolor{magenta}{HTML}{A21E63}
\definecolor{navyblue}{HTML}{2E60AA}
\date{}
\title{An Implicit Function Method for Computing the Stability Boundaries of Hill's Equation}
\NewCommandCopy{\oldtoc}{\tableofcontents}
\renewcommand{\tableofcontents}{\begingroup\hypersetup{hidelinks}\oldtoc\endgroup}
\begin{document}

\maketitle
\begin{abstract}
Hill's equation is a common model of a time-periodic system that can undergo parametric resonance for certain choices of system parameters.  For most kinds of parametric forcing, stable regions in its two-dimensional parameter space need to be identified numerically, typically by applying a matrix trace criterion.  By integrating ODEs derived from the stability criterion, we present an alternative, more accurate and computationally efficient numerical method for determining the stability boundaries of Hill's equation in parameter space.  This method works similarly to determine stability boundaries for the closely related problem of vibrational stabilization of the linearized Katpiza pendulum.  Additionally, we derive a stability criterion for the damped Hill's equation in terms of a matrix trace criterion on an equivalent undamped system.  In doing so we generalize the method of this paper to compute stability boundaries for parametric resonance in the presence of damping.
\end{abstract}

\begin{keywords}
Parametric Resonance, Stability, Hill Equation
\end{keywords}

\begin{MSCcodes}
70J40, 37C75
\end{MSCcodes}
\section{Introduction}
\label{sec:orga9d7ffe}
\label{sec:introduction}

Second order oscillators with time-periodically varying system parameters can experience the related phenomena of parametric resonance and vibrational stabilization.  Parametric resonance involves exponential amplitude growth under certain combinations of the oscillator's ostensible ``natural frequency'' and parametric forcing amplitude.  Parametric forcing refers to the effect of a time-periodic system parameter that (typically) multiplies the state, and differs from the more typical additive harmonic forcing in this way.  Parametric resonance in observed in a variety of natural and engineered systems, such as in hydrodynamic instabilities~\cite{kelly1965,benjamin1954}, time-periodic axial loading of columns~\cite{iwatsubo1974} and in MEMS devices subjected to alternating voltage~\cite{caruntu2014}.

A canonical model for linear second order systems that can undergo parametric resonance is Hill's equation~\cite{magnus2004Hill}, a harmonic oscillator with a time-periodically varying spring constant.  Certain combinations of the system's natural frequency and parametric forcing amplitude can cause instability, even at arbitrarily low forcing amplitudes under low or zero damping conditions.  These unstable parameter ranges form the well-known Arnold tongue structures~\cite{nayfeh1995}.

Vibrational stabilization, the flip side of parametric resonance, involves the stabilization via time-periodic parametric forcing of nominally unstable systems, typically a second order oscillator with a negative spring constant.  A common example of this is the Kapitza pendulum, an inverted pendulum stabilized via sinusoidal base motion that acts as sensorless feedback.  Although previously considered a high-frequency phenomenon, recent work by Berg~\cite{Berg_2015} shows that vibrational stabilization can be achieved via parametric forcing at lower frequencies with precisely chosen amplitudes.

The stability boundaries in parameter space of both Hill's equation and the linearized Kapitza pendulum are determined by specific contours of an implicit function of the system parameters.  This provides a two-dimensional \emph{stability map} for both kinds of systems.  Such maps have been used in the design of quadrupole ion traps for mass spectrometry~\cite{lee2003Stability}.  While canonical models of both phenomena are typically of second order, higher order systems can be modeled as collections of Hill oscillators~\cite{chikmagalur2024Parametric} or their unstable counterparts~\cite{chikmagalur2024Vibrational} under certain conditions, and stability criteria can be found as the composition or overlap of the individual two-dimensional stability maps of the component systems.

It is possible to derive analytical criteria for parametric resonance in the limit of small parametric forcing~\cite{nayfeh1995} using the classical Floquet theory~\cite{Berg_2015}, and for vibrational stabilization using higher-order averaging methods~\cite{maggia2019}.  However, these results are valid in limited regimes -- respectively, in the limit of low parametric forcing amplitudes and frequencies for parametric resonance and vibrational stabilization.  More general estimates require numerical computation.  Typically these boundaries are computed and visualized via contour plots of the system, as computed using a marching squares algorithm~\cite{maple2003Geometrica}.  In this work we propose a more efficient method involving direct numerical calculation and integration of this implicit function.  This method leans on the idea that we are only interested in specific contours and not the overall picture, reducing the numerical complexity of the problem.

To illustrate the method, we cover the general idea of integrating an implicit function of two variables in \cref{sec:general-method}.  For Hill's equation this function does not have a closed form, so we cover some relevant properties in \cref{sec:hill-equation-properties}, additional solution steps required in \cref{sec:hill-equation-stability-details}, and provide some examples of the results of this method.  \cref{sec:hill-equation-damped-stability-details} extends the numerical method to the case of Hill's equation with damping.  Finally, we cover some considerations for numerical integration in \cref{sec:numerical-considerations}.
\section{The Implicit Function Method for Contour Plots}
\label{sec:orgd16cde4}
\label{sec:general-method}

The general idea of the method is as follows: given a smooth function \(f(a, \epsilon)\) of variables \(a\), \(\epsilon\), we wish to plot a specific contour \(f(a, \epsilon) = c\) in \((a, \epsilon)\) space passing through \((a_0, \epsilon_0)\), where \(f(a_0, \epsilon_0) = c\). 

The condition \(f(a, \epsilon) = c\) is an implicit definition of the curve \(a(\epsilon)\) or \(\epsilon(a)\) that we seek, and we can find differential equations for them:
\begin{align}
\begin{array}{l@{\quad}c@{\quad}l}
f(a(\epsilon), \epsilon) = c & & f(a, \epsilon(a)) = c \\
\frac{\partial f}{\partial a} \frac{\mathrm{d} a}{\mathrm{d} \epsilon} + \frac{\partial f}{\partial \epsilon} = 0, & \text{or} & \frac{\partial f}{\partial a} + \frac{\partial f}{\partial \epsilon} \frac{\mathrm{d} \epsilon}{\mathrm{d} a} = 0 \\
a(\epsilon_0) = a_0 & & \epsilon(a_0) = \epsilon_0
\end{array}
\end{align}
Where at least one of the two equations is well-defined in a neighborhood of \((a_0, \epsilon_0)\).   We note that neither the ODEs nor the initial conditions involve \(c\) -- these ODEs are satisfied by the contour of \(f\) that passes through \((a_0, \epsilon_0)\) irrespective of its value.

Without loss of generality, we assume \(\frac{\partial f}{\partial a} \ne 0\) in some neighborhood of \((a_0, \epsilon_0)\), so that that the curve \(a(\epsilon)\) is locally well defined.  Then we may write
\begin{align}
\frac{\mathrm{d} a}{\mathrm{d} \epsilon} = - \frac{\frac{\partial f}{\partial \epsilon} }{\frac{\partial f}{\partial a} },\quad a(\epsilon_0) = a_0 \label{eq:ode-for-general-method}
\end{align}
Since \(f\) is known, this ODE can be readily solved to the required precision.  During numerical computation, the case of \(\frac{\partial f}{\partial a} \to 0\) can be identified by large changes in \(\frac{\mathrm{d} a}{\mathrm{d} \epsilon}\) (if using fixed time steps), by stiffness checks or progressively smaller time steps (if using adaptive time stepping), and the reciprocal problem can be solved instead.
\section{Application to Hill's Equation}
\label{sec:org880016b}
\label{sec:application-to-hill-ode}

We apply this method to finding the stability boundaries in parameter space for Hill's equation
\begin{align}
& \ddot{\theta}  + (a + \epsilon p(t)) \theta = 0 \label{eq:hill-ode} \\
& p(t) =  p(t +\nonumber 2\pi), 
\end{align}
which represents a harmonic oscillator with a periodically varying spring constant.  \(a > 0\) is the mean value of the spring constant, and \(\epsilon\) is the parametric forcing amplitude.  The parametric forcing \(p(t)\) has zero mean.  Such systems can experience parametric resonance for specific choices of \(a\) and \(\epsilon\).  Allowing \(a\) to be negative gives us the linearized model of the Kapitza pendulum, an inverted pendulum with sinusoidal vertical base motion.  This nominally unstable system is vibrationally stabilized for specific choices of \(a\) and \(\epsilon\).  Our treatment of stability boundaries covers both regimes of behavior.
\subsection{Properties of Hill's Equation}
\label{sec:orgb3a906e}
\label{sec:hill-equation-properties}

We review a few salient properties of Hill's equation for applying the implicit function method to compute its parametric stability.

In first order form, \cref{eq:hill-ode} has a Hamiltonian generator \(A(t;a, \epsilon)\) and thus a symplectic state-transition matrix \(\Theta(t,0; a, \epsilon)\) (henceforth just \(A(t)\) and \(\Theta(t,0)\)):
\begin{align}
\frac{\mathrm{d} }{\mathrm{d} t} \begin{bmatrix} \theta \\ \dot{\theta} \end{bmatrix} & = \underbrace{\begin{bmatrix}
0 & 1 \\
\text{-}a\, \text{-}\, \epsilon\, p(t) & 0
 \end{bmatrix}}_{A(t; a, \epsilon)} \begin{bmatrix} \theta \\ \dot{\theta} \end{bmatrix} \label{eq:hill-ode-1st-order}\\
 \begin{bmatrix} \theta(t) \\ \dot{\theta}(t) \end{bmatrix} & = \Theta(t,0; a, \epsilon) \begin{bmatrix} \theta(0) \\ \dot{\theta}(0) \end{bmatrix}
\end{align}
The stability of this linear, time periodic system depends on the spectral radius of its ``monodromy map'', its state transition matrix evaluated at one forcing period \(\Theta(2\pi,0)\).  The system is stable if its spectral radius is \(1\) or smaller, and unstable if it is larger.

This stability condition can be simplified as follows: \(\Theta(t,0)\) has determinant \(1\) for all \(t\), as it is symplectic.  This can be verified directly as:
\begin{align*}
\frac{1}{ \det \Theta(t,0) }\frac{\mathrm{d} \det \Theta(t,0)}{\mathrm{d} t} & =  \tr \left\{ \dot{\Theta}(t,0) \Theta^{\text{-}1}(t,0) \right\} \\
& = \tr \left\{ A(t) \Theta(t,0) \Theta^{\text{-}1}(t,0) \right\} = \tr \left\{ A(t) \right\} \\
& = 0
\end{align*} 
Since \(\det \Theta(0,0) = \det I = 1\), it is \(1\) for all time.

Together, the conditions \(\det \Theta(t,0) = 1\), \(\bar{\sigma}(\Theta(2\pi,0)) \le 1\) and the fact That \(\Theta(2\pi,0)\) is real and \(2 \times 2\) imply that system~\eqref{eq:hill-ode} is stable iff \(\tr \left\{ \Theta(2\pi,0) \right\} = \pm 2\).

The condition \(\tr{ \Theta(2\pi, 0; a, \epsilon)} = \pm 2\) is a function of two variables whose contours in \((a, \epsilon)\) space with values \(\pm 2\) form the boundary between regions of stability and regions of parametric resonance.  These boundaries in parameter space form the well known Arnold tongue structures associated with parametric resonance (\cref{fig:stability-boundaries-explanation}).

For Hill's equation, the theory of linear time-periodic systems provides many analytical methods to find the positions of the tongues in the limit \(\epsilon \to 0\).  Perturbation analysis, the method of Hill's determinants~\cite{wereley1990thesis} and Floquet multipliers~\cite{nayfeh1995} are three common methods.  The primary result is the following:  In the limit \(\epsilon \to 0^+\), system \eqref{eq:hill-ode} is unstable when \(\sqrt{a}\) is a half-multiple of the forcing frequency \(1\), i.e. when
\begin{align}
a = \frac{n^2}{4},\ n \in \mathbb{N} \label{eq:hill-stability-criterion}
\end{align}
Further, it can be shown~\cite{nayfeh1995} that there are no other values of \(a > 0\) (with \(\epsilon \to 0^+\)) for which the system is unstable.
\subsection{Stability Boundary Computation for Hill's Equation}
\label{sec:orgefa48e1}
\label{sec:hill-equation-stability-details}

The stability boundaries are typically computed in the following way:
\begin{enumerate}
\item Create a grid in \((a, \epsilon)\) space that covers the parameter ranges of interest.
\item Find the monodromy map (and thus its trace) numerically at all points on this grid.
\item Plot the contours of the map that have values \(\pm 2\).
\end{enumerate}

This method has three shortcomings.  The first is the computational burden of calculating the value of \(\tr \left\{ \Theta(2\pi, 0) \right\}\) at all grid points.  Essentially, this involves scanning a two dimensional space in search of a one-dimensional curve.  The second is that the slope of the curve is not known apriori, so the spatial grid resolution cannot be adjusted to take advantage of this.  As a result, several runs with progressively finer grids are required to obtain a suitably detailed picture.  Finally, there are regions in the parameter space where \(\tr \left\{ \Theta(2\pi, 0) \right\}\) changes rapidly with \(a\), making it difficult to resolve fine details in these areas in a contour map created with a typical Marching Squares algorithm.

Applying the implicit function method outlined in \cref{sec:general-method} avoids all of these shortcomings, but it requires knowledge of at least one point on the contour of interest.  Specifically, for each contour such that \(\tr \left\{ \Theta(2\pi, 0; a_0, \epsilon_0) \right\} = \pm 2\), we need to know the corresponding \((a_0, \epsilon_0)\).  From \cref{eq:hill-stability-criterion}, this point is \((n^2/4, 0)\).

\begin{figure}[ht]
\centering
\includegraphics[width=0.7\columnwidth]{./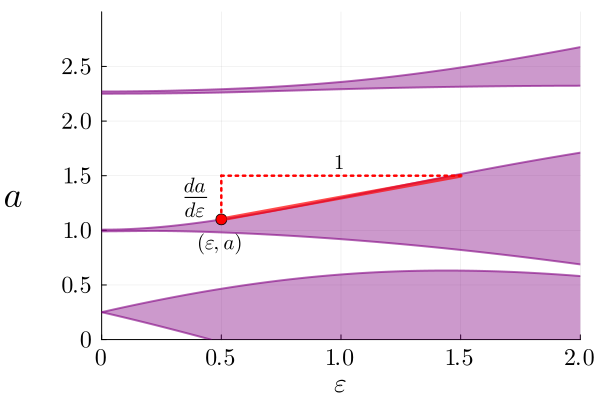}
\caption{\label{fig:stability-boundaries-explanation}A representative stability diagram for Hill's equation illustrating the implicit function method~\eqref{eq:ode-d-a-d-eps} of finding stability boundaries.  The time-periodic forcing in this example is sinusoidal with period \(2\pi\).  The system experiences parametric resonance with paramters in the shaded regions.  All stability boundaries are the curves \((\epsilon, a(\epsilon))\) given by the implicit equation \(\left| \tr \Theta(2\pi, 0; a(\epsilon), \epsilon) \right| = 2\), starting at \(a = n^2/4,\ n \in \mathbb{N}\) and \(\epsilon = 0\).  These curves are found by integrating \cref{eq:ode-d-a-d-eps} starting at these initial conditions.}
\end{figure}

We can now apply the above method to find the stability boundaries for Hill's equation.  To find \(a(\epsilon)\) such that
\begin{align*}
& f(a(\epsilon), \epsilon) := \tr \left\{ \Theta(2\pi, 0; a(\epsilon), \epsilon) \right\} = \pm 2 \text{ and} \\
& a(0) = \frac{n^2}{4}, n \in \mathbb{N},
\end{align*}
we formulate the equivalent of \cref{eq:ode-for-general-method} for this problem:
\begin{align}
\frac{\mathrm{d} a}{\mathrm{d} \epsilon} & = - \frac{\frac{\partial f}{\partial \epsilon} }{\frac{\partial f}{\partial a} } = - \frac{\frac{\partial }{\partial \epsilon} \tr[\Theta(2\pi,0; a, \epsilon)]}{\frac{\partial }{\partial a} \tr[\Theta(2\pi,0; a, \epsilon)]} \nonumber\\
& = - \frac{\tr\left[ \frac{\partial \Theta}{\partial \epsilon}(2\pi,0; a, \epsilon) \right]}{\tr \left[ \frac{\partial \Theta}{\partial a}(2\pi,0; a, \epsilon) \right]}. \label{eq:ode-d-a-d-eps}
\end{align}
The slope \(\frac{\mathrm{d} a}{\mathrm{d} \epsilon}\) of a stability map boundary in \((a(\epsilon), \epsilon)\) space is illustrated in  \cref{fig:stability-boundaries-explanation}.  To calculate \(\frac{\partial \Theta}{\partial \epsilon}\) and \(\frac{\partial \Theta}{\partial a}\), we can use Hill's equation~\eqref{eq:hill-ode-1st-order}.  Differentiating the original system with respect to the parameters \(a\) and \(\epsilon\):
\begin{align}
\dot{\Theta}&(t,0)  = A(t)\, \Theta(t,0) \label{eq:phi-dot}\\
\implies \frac{\partial \dot{\Theta}(t,0)}{\partial a} & = A(t) \frac{\partial \Theta(t,0)}{\partial a} + \frac{\partial A(t)}{\partial a} \Theta(t,0) \label{eq:ode-d-phi-d-a}\\
\implies \frac{\partial \dot{\Theta}(t,0)}{\partial \epsilon} & = A(t) \frac{\partial \Theta(t,0)}{\partial \epsilon} + \frac{\partial A(t)}{\partial \epsilon} \Theta(t,0), \label{eq:ode-d-phi-d-eps}
\end{align}
Where we have suppressed the explicit dependence of \(\Theta\) and \(A\) on \(a\) and \(\epsilon\) in the notation.  The linear ODEs~\eqref{eq:ode-d-phi-d-a} and~\eqref{eq:ode-d-phi-d-eps} can be integrated from \(t = 0\) to \(2\pi\) to give \(\frac{\partial \Theta}{\partial a}(2\pi, 0; a, \epsilon)\) and \(\frac{\partial \Theta}{\partial \epsilon}(2\pi,0;a,\epsilon)\) respectively.  Since \(\Theta(0,0;a,\epsilon) = I\) uniformly for any choice of \(a\) and \(\epsilon\), the initial conditions are uniformly zero in the above equations, and they are driven by the state transition matrix \(\Theta(t,0; a, \epsilon)\) at \((a, \epsilon)\), which appears as an additive input.  This recipe for finding stability boundaries is illustrated in \cref{fig:solution-algorithm-step} and \cref{fig:plot-boundaries-explanation}.

\begin{figure}
\begin{center}
\begin{tikzpicture}[
    auto, node distance = 1.5em,
    block/.style = {
        draw, rectangle,
        minimum height=3em, minimum width=6em,
        inner sep=0.5em
    },
    line/.style = {draw, -latex'},
    >=latex,
    every path/.style={thick}
]

\node (mainblock) [block, anchor=east] {\( \displaystyle \frac{\mathrm{d} a}{\mathrm{d} \epsilon}  =  - \frac{ \tr \left[  {\color{navyblue} \frac{\partial \Theta}{\partial \epsilon}}(2\pi,0; a, \epsilon)  \right] }{ \tr \left[  {\color{magenta} \frac{\partial \Theta}{\partial a}}(2\pi,0; a, \epsilon)  \right]} \)};

\node (auxblock1) [block, below=6em of mainblock] {\( \displaystyle \frac{\mathrm{d} }{\mathrm{d} t}{\color{magenta} \frac{\partial \Theta}{\partial a}}  = A {\color{magenta} \frac{\partial \Theta}{\partial a}} + \frac{\partial A}{\partial a} {\color{darkgreen} \Theta} \)};

\node (statetransition)  [block, above=of auxblock1.north west, anchor = south west] {\( \displaystyle \frac{\mathrm{d} }{\mathrm{d} t}{\color{darkgreen} \Theta} = A {\color{darkgreen} \Theta} \)};

\node (auxblock2) [block, below=of auxblock1] {\( \displaystyle \frac{\mathrm{d} }{\mathrm{d} t} {\color{navyblue} \frac{\partial \Theta}{\partial \epsilon}}  = A {\color{navyblue} \frac{\partial \Theta}{\partial \epsilon}} + \frac{\partial A}{\partial \epsilon} {\color{darkgreen} \Theta} \)};

\node (rightmost) [coordinate, right=7em of auxblock1] {};
\node (leftmost)  [coordinate, left=5em of auxblock1, yshift=-0.5em] {};
\node (statein) [coordinate, left=5em of statetransition] {};
\node (stateright) [coordinate, right=5em of statetransition] {};
\node (stateleft) [coordinate, left=3em of statetransition, yshift=-2.25em] {};

\draw [color=gray] (statetransition) to node[pos=0.5,above,color=black] {$\displaystyle {\color{darkgreen} \Theta}(t,0;a, \epsilon)$} (stateright) |- (stateleft);
\draw [->] (statein) |- (statetransition);

\draw[->,color=gray] (stateleft) |- ($ (auxblock1.west) + (0, 0.5em) $);
\draw[->,color=gray] (stateleft) |- ($ (auxblock2.west) + (0, 0.5em) $);

\draw [<-] (mainblock) -|  (rightmost) --  node[pos=0.5, above] {\( \displaystyle {\color{magenta}\frac{\partial \Theta}{\partial a}}(2\pi, 0; a, \epsilon)  \)} (auxblock1);
\draw [<-] (rightmost) |- node[pos=0.75, above] {\( \displaystyle {\color{navyblue} \frac{\partial \Theta}{\partial \epsilon}}(2\pi, 0; a, \epsilon) \)} (auxblock2);

\draw [<-] ($(auxblock1.west) + (0,-0.5em)$) -- (leftmost) -- (statein);
\draw [<-] (statein) |- node[pos=0.75, above] {$\displaystyle (a,\epsilon)$} (mainblock);
\draw [<-] ($(auxblock2.west) + (0,-0.5em)$) -| (leftmost);
\end{tikzpicture}
\end{center}
\caption{\label{fig:solution-algorithm-step}Illustration of the solution method for stability boundary calculation for Hill's equation.  The goal is to find the set of points \(\left\{ (a, \epsilon) \right\}\) that satisfies the topmost block, \cref{eq:ode-d-a-d-eps}, which is a curve passing through known \((a_0, \epsilon_0)\).  For Hill's equation this stability boundary is representable as \(a(\epsilon)\).  At each time step, the current curve coordinates \((a, \epsilon)\) determine the state transition matrix \(\Theta(;a, \epsilon)\) that appears as an input elsewhere.  We compute the state transition matrix over one forcing period and use it to integrate systems~\eqref{eq:ode-d-phi-d-a} and \eqref{eq:ode-d-phi-d-eps}.  The final value of \(\frac{\partial \Theta}{\partial a}\) and \(\frac{\partial \Theta}{\partial \epsilon}\) gives us the value of \(\frac{\mathrm{d} a}{\mathrm{d} \epsilon}\) for this time step.  Advancing the main integrator in time in turn gives us the new coordinates \((a,\epsilon)\) required to continue the solution.  (The dependence of \(\Theta(t,0; a, \epsilon)\) and \(A(t)\) on time has been suppressed in the notation.)}
\end{figure}
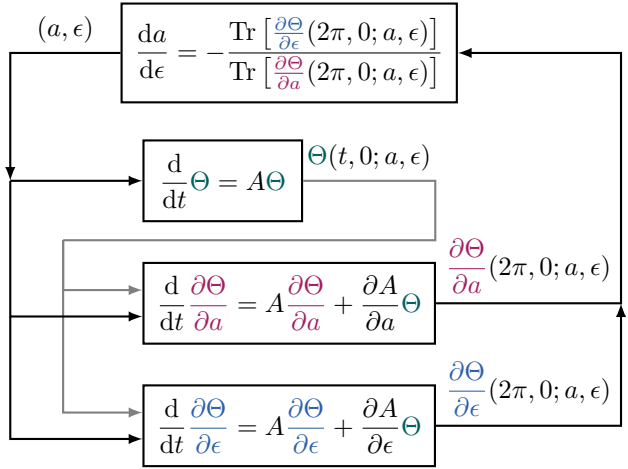

\begin{figure}[htb]
\centering
\includegraphics[width=0.8\textwidth]{./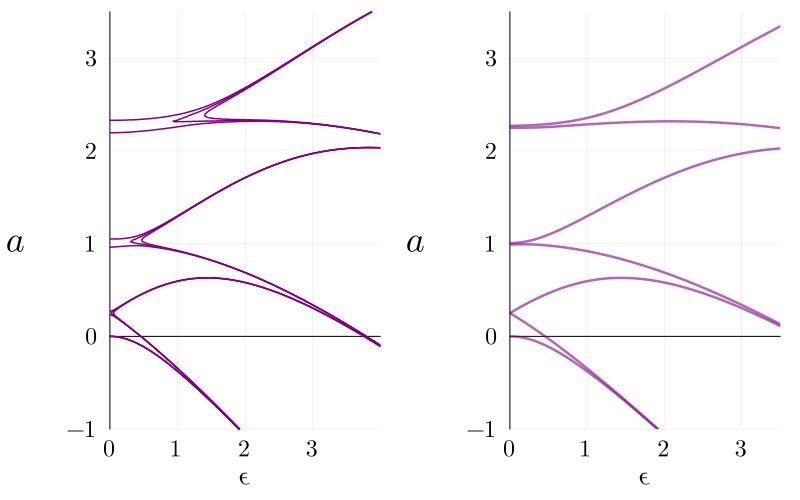}
\caption{\label{fig:boundaries-cos-contour-comparison}Comparison of the stability map in \((\epsilon, a)\) parameter space of Hill's equation with sinusoidal parametric forcing, as computed by (left) the contour method and (right) the implicit function method detailed in \cref{sec:hill-equation-stability-details} and \cref{fig:stability-boundaries-explanation}.  The contour method involves plotting the contour of the function \(f(a, \epsilon) =  \tr\ \Theta(2\pi, 0; a, \epsilon)\) with absolute value \(2\).  This function is computed at a grid resolution of \((\Delta \epsilon, \Delta a) = (0.02, 0.02)\), and the contour levels plotted are \(1.99 < \left| \tr\ \Theta(2\pi, 0; a, \epsilon) \right| < 2.01\).  The implicit function method involves integrating one implicit function for each Arnold tongue boundary \cref{eq:ode-d-a-d-eps} with a step of \(\Delta \epsilon = 0.05\).  The implicit function method is more efficient, and provides much better resolution of the Arnold tongues as well, especially for small \(\epsilon\).  In our tests, it is also faster by a factor of \(10-50\), depending on the desired grid resolution.}
\end{figure}

\cref{fig:boundaries-cos-contour-comparison}  compares the stability maps for Hill's equation as computed using the contour method and the implicit function method introduced in this paper.  The implicit function method provides better resolution with significantly less computational effort.

\cref{fig:stability-diagram-sideways-all} shows the stability boundaries computed using this method for Hill's equation with a few different parametric forcing functions.

\begin{figure*}
\centering
\includegraphics[width=0.95\textwidth]{./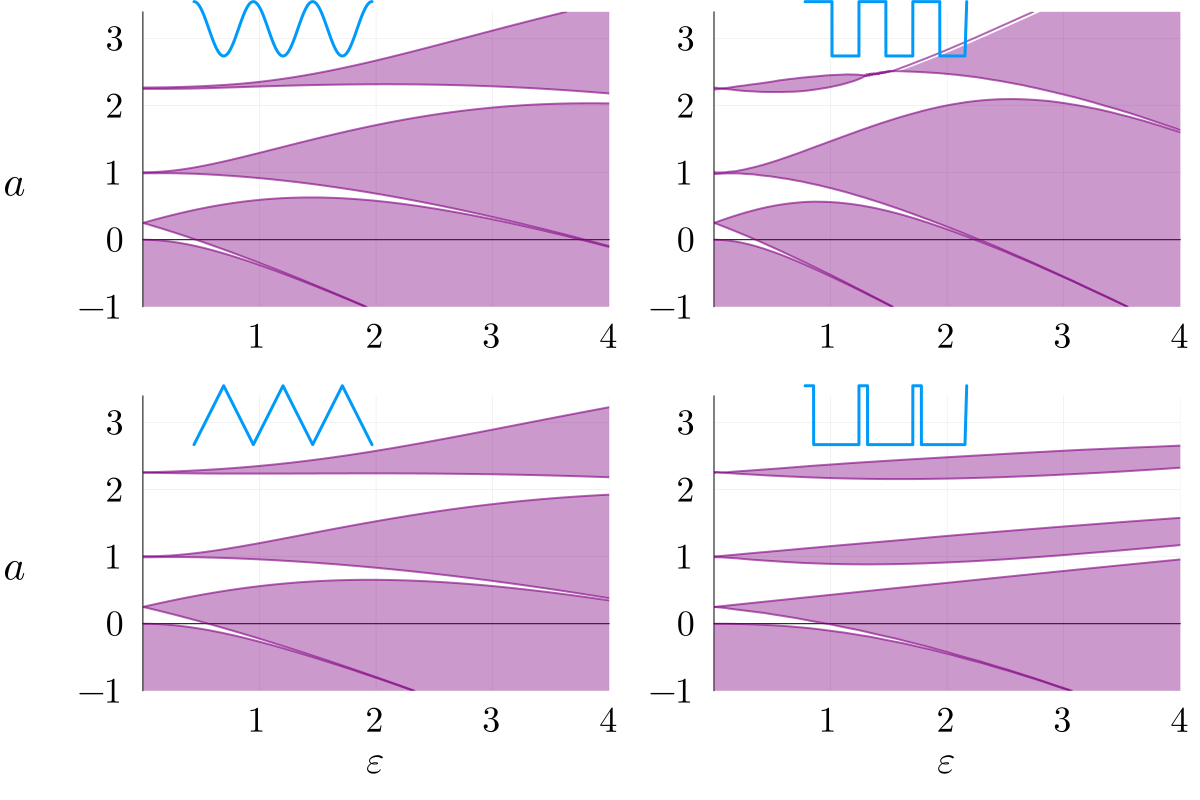}
\caption{\label{fig:stability-diagram-sideways-all}Stability diagrams in the \((\epsilon, a)\) parameter space for Hill's equation, for four different kinds of zero-mean parametric forcing (inset, in blue).  Clockwise from the top, they are sinusoidal, square waves with even and uneven duty cycles, and a periodic ramp.  The stability boundaries are computed using the implicit function method (\cref{fig:solution-algorithm-step}) for the different forcing functions.  For \(a > 0\), the plots show the first three Arnold tongues, whose locations as \(\epsilon \to 0^+\) are independent of the type of forcing.  For \(a < 0\), the plots show the \((\epsilon, a)\) parameter ``window'' that causes vibrational stabilization of the nominally exponentially unstable system.}
\end{figure*}
\subsection{Stability boundaries for Hill's equation with damping}
\label{sec:orga141756}
\label{sec:hill-equation-damped-stability-details}

In this section we apply the implicit function method of~\cref{sec:hill-equation-stability-details} to Hill's equation with viscous damping coefficient \(\kappa\):
\begin{equation}
\ddot{\theta} + 2 \kappa \dot{\theta} + \left( a + \epsilon p(t) \right) \theta = 0. \label{eq:hill-ode-damped}
\end{equation}
Through the transformation \(z(t) := e^{\kappa t} \theta(t)\), it is possible to derive an equivalent stability criterion for a related (undamped) Hill equation.

\begin{theorem}
Given~\cref{eq:hill-ode-damped}, we define the related (undamped) Hill's equation
\begin{align*}
\ddot{z} + \left( a - \kappa^2 + \epsilon p(t) \right) z = 0
\end{align*}
with state transition matrix \(\Phi(t,0; a, \epsilon)\).  Then system~\eqref{eq:hill-ode-damped} is unstable whenever
\begin{align}
\left\lvert \tr \left[ \Phi(2\pi, 0; a, \epsilon) \right] \right\rvert > 2 \cosh(2\pi \kappa) \label{eq:hill-stability-criterion-damped}
\end{align}
\end{theorem}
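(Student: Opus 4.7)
The plan is to exploit the change of variables $z(t) := e^{\kappa t}\theta(t)$ already suggested in the preamble to the theorem. A direct computation gives $\dot z = e^{\kappa t}(\kappa\theta + \dot\theta)$ and $\ddot z = e^{\kappa t}(\ddot\theta + 2\kappa\dot\theta + \kappa^2\theta)$; substituting~\cref{eq:hill-ode-damped} to eliminate $\ddot\theta$ cancels the $2\kappa\dot\theta$ term and yields exactly $\ddot z + (a - \kappa^2 + \epsilon p(t))z = 0$. So every trajectory of the damped Hill equation corresponds, via $z = e^{\kappa t}\theta$, to a trajectory of the undamped auxiliary system, and the instability of $\theta$ is controlled by how fast $z$ grows relative to the decaying factor $e^{-\kappa t}$.

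Next I would promote this pointwise change of variables to a relation between the monodromy maps. In first-order form the transformation reads
\begin{equation*}
\begin{bmatrix} z(t) \\ \dot z(t)\end{bmatrix} = e^{\kappa t}\, T \begin{bmatrix} \theta(t) \\ \dot\theta(t)\end{bmatrix},\qquad T = \begin{bmatrix} 1 & 0 \\ \kappa & 1 \end{bmatrix}.
\end{equation*}
Writing $\Theta_\kappa(2\pi,0;a,\epsilon)$ for the monodromy of~\cref{eq:hill-ode-damped} and equating the two ways of propagating the state from $0$ to $2\pi$ gives $\Phi(2\pi,0)\,T = e^{2\pi\kappa}\,T\,\Theta_\kappa(2\pi,0)$, hence
\begin{equation*}
\Theta_\kappa(2\pi,0;a,\epsilon) = e^{-2\pi\kappa}\, T^{-1}\,\Phi(2\pi,0;a,\epsilon)\, T.
\end{equation*}
In particular, the eigenvalues of $\Theta_\kappa(2\pi,0)$ are precisely $e^{-2\pi\kappa}$ times those of $\Phi(2\pi,0)$.

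To close the argument I would invoke the symplectic structure of $\Phi(2\pi,0)$, established in \cref{sec:hill-equation-properties}: its eigenvalues $\mu_1,\mu_2$ satisfy $\mu_1\mu_2 = 1$ and $\mu_1 + \mu_2 = \tr\Phi(2\pi,0)$. When $|\tr\Phi|\le 2$ both $\mu_i$ lie on the unit circle, so the eigenvalues of $\Theta_\kappa(2\pi,0)$ have modulus $e^{-2\pi\kappa}\le 1$ and the damped system is stable. When $|\tr\Phi|>2$, the $\mu_i$ are real reciprocals, and we may write $|\tr\Phi(2\pi,0)| = 2\cosh\eta$ with $|\mu_1| = e^{\eta}$, $\eta > 0$. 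The spectral radius of $\Theta_\kappa(2\pi,0)$ is then $e^{\eta - 2\pi\kappa}$, which strictly exceeds $1$ iff $\eta > 2\pi\kappa$, iff $|\tr\Phi(2\pi,0)|>2\cosh(2\pi\kappa)$, giving~\eqref{eq:hill-stability-criterion-damped}.

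The main obstacle is not the algebra but the bookkeeping of cases. The similarity by $T$ automatically preserves the spectrum, and the $\cosh$ parameterization plugs in cleanly; the only delicate point is the borderline case $|\tr\Phi| = 2\cosh(2\pi\kappa)$, where $\Theta_\kappa$ has spectral radius exactly $1$ and one must decide between Lyapunov stability and polynomial growth by checking whether the corresponding eigenvalue is semisimple. Because the theorem asks only for the strict inequality as a sufficient criterion for instability, this subtlety lies outside the statement to be proved.
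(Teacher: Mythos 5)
Your proposal is correct and follows essentially the same route as the paper: the substitution \(z = e^{\kappa t}\theta\), the first-order transformation matrix \(\begin{bmatrix} 1 & 0 \\ \kappa & 1 \end{bmatrix}\), and the resulting similarity relation \(e^{2\pi\kappa}\Theta(2\pi,0) = T^{-1}\Phi(2\pi,0)T\) are exactly the ingredients of the paper's appendix proof, with the only cosmetic difference that you read the determinant-one (symplectic) structure off \(\Phi\) and parameterize its trace by \(2\cosh\eta\), whereas the paper states the symplecticity of \(e^{2\pi\kappa}\Theta(2\pi,0)\) as a lemma and reasons at the boundary eigenvalues \(\pm e^{\pm 2\pi\kappa}\). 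Your case analysis is in fact slightly more explicit than the paper's, and your remark that the borderline \(\lvert\tr\Phi\rvert = 2\cosh(2\pi\kappa)\) case lies outside the stated strict inequality is accurate.
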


\begin{proof}
See~\cref{sec:damping-derivation}.
\end{proof}

Since \(2 \cosh(2\pi \kappa) \ge 2\), the Arnold tongues for Hill's equation with damping do not meet on the \(a\) axis, and there is no equivalent to criterion~\eqref{eq:hill-stability-criterion}.  Integrating~\cref{eq:ode-d-a-d-eps} along the contour \(\left| \tr \left[ \Phi(2\pi, 0; a, \epsilon) \right] \right| = 2 \cosh(2\pi \kappa)\) thus requires finding one point along each ``branch'' of the stability boundary.

We find these initial conditions \((\epsilon_0, a_0)\) numerically, using the criteria \(\left| \tr \left[ \Phi(2\pi, 0; a_0, \epsilon_0) \right] \right| =\) \(2 \cosh(2\pi \kappa)\) and \(\left. \frac{\partial }{\partial a} \left[ \Phi(2\pi, 0; a, \epsilon) \right] \right|_{( \epsilon_0, a_0)} = 0\).  The rest of the treatment is identical to the undamped case covered in~\cref{sec:hill-equation-damped-stability-details}.  \cref{fig:plot-boundaries-explanation} illustrates the method.

\begin{figure}[ht]
\centering
\includegraphics[width=1.0\textwidth]{./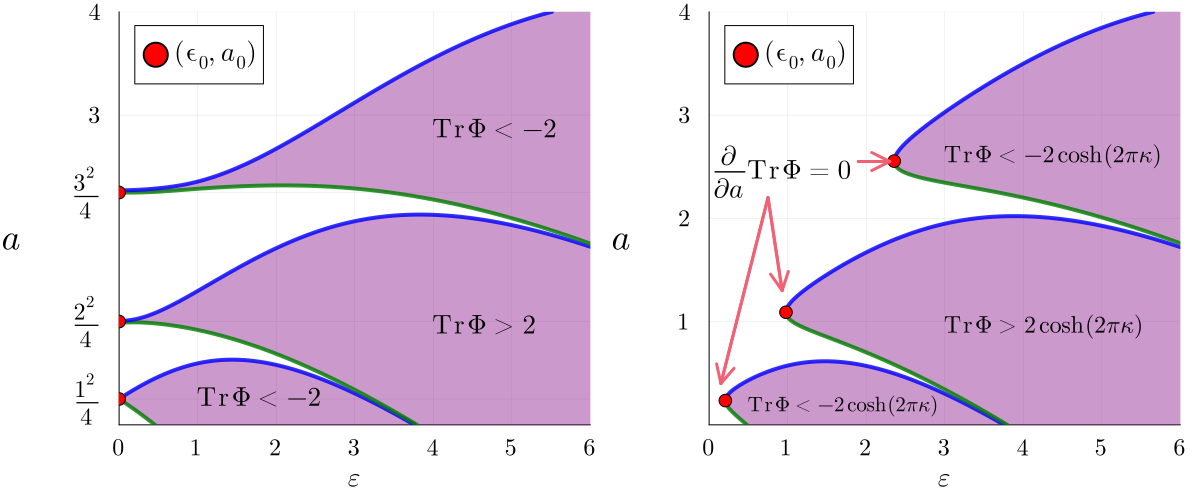}
\caption{\label{fig:plot-boundaries-explanation}Illustration of the method for stability boundary calculations for Hill's equation with and without damping (\cref{eq:hill-ode,eq:hill-ode-damped} respectively).  \emph{Left}: Hill's equation without damping.  Each Arnold tongue boundary corresponds to a contour \(\left| \tr \left[ \Theta(2\pi, 0) \right] \right| = 2\) and meets the \(a\) axis at \(n^2/4\), \(n \in \mathbb{N}\).  Thus ODE~\eqref{eq:ode-d-a-d-eps} can be integrated starting at \((\epsilon_0, a_0) = \left( 0, n^2/4 \right)\).  See \cref{fig:solution-algorithm-step} for details of the integration process.  \emph{Right}: Hill's equation with damping coefficient \(\kappa\).  The method is similar to the undamped case, but the initial conditions \((\epsilon_0, a_0)\) for each Arnold tongue have to be found numerically.  The ODE~\eqref{eq:ode-d-a-d-eps} is integrated from the left-most point of each tongue, corresponding to \(\left. \tr \left[ \Phi(2\pi, 0) \right] \right|_{(\epsilon_0, a_0)} = 2 \cosh(2\pi \kappa)\) and \(\left. \frac{\partial }{\partial a} \tr \left[ \Phi(2\pi, 0) \right] \right|_{(\epsilon_0, a_0)} = 0\).  See \cref{sec:hill-equation-damped-stability-details} for details.  In both cases, finding \(a\) as a function of \(\epsilon\) requires picking one of the two ``branches'' of the stability boundary passing through \((\epsilon_0, a_0)\).  The upper and lower branches of the stability boundary (colored blue and green respectively) are resolved by perturbing \((\epsilon_0, a_0)\) appropriately.}
\end{figure}
\section{Considerations for Numerical Integration}
\label{sec:org8570350}
\label{sec:numerical-considerations}

In this section we briefly mention some details of and considerations for the numerical integration of \cref{eq:ode-d-a-d-eps}.

\begin{itemize}
\item Since the Hill equation is a Hamiltonian dynamical system,  \cref{eq:phi-dot} needs to be solved using a symplectic integrator.  Depending on the required accuracy, a symplectic Euler method or an ``almost symplectic'' trapezoidal method can suffice.

\item The dynamics of \(\frac{\partial \Theta}{\partial a}\), \(\frac{\partial \Theta}{\partial \epsilon}\) and \(\frac{\mathrm{d} a}{\mathrm{d} \epsilon}\) (\cref{eq:ode-d-phi-d-a,eq:ode-d-phi-d-eps,eq:ode-d-a-d-eps} respectively) have no special structure and are solved for using an explicit Runge-Kutta (Owren-Zennaro optimized interpolatioon 5/4~\cite{owren1992Derivation}) method.

\item Since \(\tr \left[ \frac{\partial \Theta}{\partial a}(2\pi, 0; a, \epsilon) \right] \to 0\) where two Arnold tongues meet, the integration for \cref{eq:ode-d-a-d-eps} can stall around these points.  We switch conditionally to solving the reciprocal problem when appropriate:
\begin{align*}
\frac{\mathrm{d} \epsilon}{\mathrm{d} a} = - \frac{\tr \left[ \frac{\partial \Theta}{\partial a}(2\pi, 0; a, \epsilon) \right]}{\tr \left[ \frac{\partial \Theta}{\partial \epsilon}(2\pi, 0; a, \epsilon) \right]}
\end{align*}

\item Finally, computing the stability map for the damped Hill's equation requires finding points \((\epsilon_0, a_0)\) where \(\tr \left[ \Phi(2\pi, 0; a_0, \epsilon_0) \right] = 2 \cosh(2\pi \kappa)\) and \(\left. \frac{\partial }{\partial a} \tr \left[ \Phi(2\pi, 0; a, \epsilon) \right] \right|_{(\epsilon_0, a_0)} = 0\).  These are found by integrating \cref{eq:phi-dot,eq:ode-d-phi-d-a} and a binary search above the locations of the tongues of the undamped system, i.e. in the vicinity of \((\epsilon_0, a_0) = \left( 0, n^2/4 \right)\) for \(n \in \mathbb{N}\).
\end{itemize}
\section{Extensions and Limitations of the Implicit Function method}
\label{sec:org204cc91}
\label{sec:conclusion}

The implicit function method detailed in \cref{sec:general-method} can be applied to compute specific contours of any smooth scalar-valued function more efficiently than evaluating the function on a grid.

While the method provides better resolution of the tongues compared to plotting contours of the trace of the monodromy map, it also rests on certain assumptions about the geometry of the Arnold tongues, for instance that the stability boundaries are smooth and representable in the form \(a(\epsilon)\).  It is thus not a true parametric description of the stability boundaries of the form \((a(\eta), \epsilon(\eta))\) for some parameter \(\eta\), and requires careful handling of ``crossover'' points of the Arnold tongues, where two tongues meet.

It is also reliant on the existence of a scalar criterion for stability, of the form \(f(a,\epsilon) = \text{constant}\).  The stability criterion for parametric resonance in higher-order linear systems involves the eigenvalues of the monodromy map that is not reducible to a criterion involving its trace.  When the system cannot be diagonalized into a collection of decoupled second order time-periodic oscillators,  neither the contour method nor the implicit function method discussed in this paper can be directly applied to obtain stability maps.  Extending the method to generate the stability maps of higher order systems, possibly using the symplectic properties of the monodromy map, is a promising avenue of future work.

\bibliographystyle{siamplain}
\bibliography{~/Documents/roam/biblio,/home/karthik/Documents/roam/biblio}

\begin{thebibliography}{10}

\bibitem{benjamin1954}
{\sc {Benjamin} and {Ursell}}, {\em The stability of the plane free surface of
  a liquid in vertical periodic motion}, Proceedings of the Royal Society of
  London. Series A. Mathematical and Physical Sciences, 225 (1954),
  pp.~505--515, \url{https://doi.org/10.1098/rspa.1954.0218}.

\bibitem{Berg_2015}
{\sc J.~M. Berg and I.~M. Wickramasinghe}, {\em Vibrational control without
  averaging}, Automatica, 58 (2015), pp.~72--81,
  \url{https://doi.org/10.1016/j.automatica.2015.04.028}.

\bibitem{caruntu2014}
{\sc D.~I. Caruntu and I.~Martinez}, {\em Reduced order model of parametric
  resonance of electrostatically actuated {{MEMS}} cantilever resonators},
  International Journal of Non-Linear Mechanics, 66 (2014), pp.~28--32,
  \url{https://doi.org/10.1016/j.ijnonlinmec.2014.02.007}.

\bibitem{chikmagalur2024Parametric}
{\sc K.~Chikmagalur and B.~Bamieh}, {\em Parametric {{Resonance}} in
  {{Networked Oscillators}}}, June 2024,
  \url{https://doi.org/10.48550/arXiv.2406.11117},
  \url{https://arxiv.org/abs/2406.11117}.

\bibitem{chikmagalur2024Vibrational}
{\sc K.~Chikmagalur and B.~Bamieh}, {\em Vibrational stabilization of
  multi-link mechanical systems}, Automatica, 165 (2024), p.~111589,
  \url{https://doi.org/10.1016/j.automatica.2024.111589}.

\bibitem{iwatsubo1974}
{\sc T.~Iwatsubo, Y.~Sugiyama, and S.~Ogino}, {\em Simple and combination
  resonances of columns under periodic axial loads}, Journal of Sound
  Vibration, 33 (1974), pp.~211--221.

\bibitem{kelly1965}
{\sc R.~E. Kelly}, {\em The stability of an unsteady {{Kelvin}}--{{Helmholtz}}
  flow}, Journal of Fluid Mechanics, 22 (1965), pp.~547--560,
  \url{https://doi.org/10.1017/S0022112065000964}.

\bibitem{lee2003Stability}
{\sc W.-W. Lee, S.-K. Min, C.-H. Oh, P.-S. Kim, S.-H. Song, M.~Yang, and
  K.~Song}, {\em Stability of ion motion in the quadrupole ion trap driven by
  rectangular waveform voltages}, International Journal of Mass Spectrometry,
  230 (2003), pp.~65--70, \url{https://doi.org/10.1016/j.ijms.2003.08.008}.

\bibitem{maggia2019}
{\sc M.~Maggia, S.~A. Eisa, and H.~E. Taha}, {\em On higher-order averaging of
  time-periodic systems: Reconciliation of two averaging techniques}, 99
  (2019), pp.~813--836, \url{https://doi.org/10.1007/s11071-019-05085-4}.

\bibitem{magnus2004Hill}
{\sc W.~Magnus and S.~Winkler}, {\em Hill's {{Equation}}}, Courier Corporation,
  Jan. 2004.

\bibitem{maple2003Geometrica}
{\sc C.~Maple}, {\em Geometric design and space planning using the marching
  squares and marching cube algorithms}, in 2003 {{International Conference}}
  on {{Geometric Modeling}} and {{Graphics}}, 2003. {{Proceedings}}, July 2003,
  pp.~90--95, \url{https://doi.org/10.1109/GMAG.2003.1219671}.

\bibitem{nayfeh1995}
{\sc A.~H. Nayfeh and D.~T. Mook}, {\em Nonlinear Oscillations}, Wiley, May
  1995, \url{https://doi.org/10.1002/9783527617586}.

\bibitem{owren1992Derivation}
{\sc B.~Owren and M.~Zennaro}, {\em Derivation of {{Efficient}},
  {{Continuous}}, {{Explicit Runge}}--{{Kutta Methods}}}, SIAM Journal on
  Scientific and Statistical Computing, 13 (1992), pp.~1488--1501,
  \url{https://doi.org/10.1137/0913084}.

\bibitem{wereley1990thesis}
{\sc N.~M. Wereley}, {\em Analysis and Control of Linear Periodically Time
  Varying Systems}, PhD thesis, Massachusetts Institute of Technology, 1990.

\end{thebibliography}

\appendix
\section{Hill's equation with damping}
\label{sec:org4d7939c}
\label{sec:damping-derivation}

In this section we review properties of the monodromy map of the damped Hill's equation that we use in \cref{sec:hill-equation-damped-stability-details}.

Hill's equation with damping is (see \cref{eq:hill-ode-damped})
\begin{equation}
\ddot{\theta} + 2 \kappa \dot{\theta} + \left( a + \epsilon p(t) \right) \theta = 0
\end{equation}
The state-transition matrix for this system is given, as before, by \(\Theta(t,0; a, \epsilon, \kappa)\), henceforth just \(\Theta(t,0)\).  This can be transformed into the undamped Hill equation through the variable substitution \(z(t) := e^{\kappa t} \theta(t)\):
\begin{align}
\begin{bmatrix} z(t) \\ \dot{z}(t) \end{bmatrix} & = e^{\kappa t} \underbrace{\begin{bmatrix}
1 & 0 \\
\kappa & 1
 \end{bmatrix}}_K \begin{bmatrix} \theta(t) \\ \dot{\theta}(t) \end{bmatrix} =: e^{\kappa t} K \begin{bmatrix} \theta(t) \\ \dot{\theta}(t) \end{bmatrix} \label{eq:hill-equation-damping-transformation}\\
\implies & \ddot{z} + \left( a - \kappa^2 + \epsilon p(t) \right) z = 0, \label{eq:hill-equation-damped-transformed}
\end{align}
where we have defined the transformation matrix
\begin{align}
K := \begin{bmatrix} 1 & 0 \\ \kappa & 1 \end{bmatrix}. \label{eq:damping-transformation-matrix}
\end{align}
The nominal spring constant in the transformed coordinates is lower than the original by \(\kappa^2\).  We denote the state transition matrix for system~\eqref{eq:hill-equation-damped-transformed} as \(\Phi(t,0)\), suppressing the dependence on the parameters \(a\), \(\epsilon\) and \(\kappa\).

System~\eqref{eq:hill-ode-damped} is unstable when \(\Theta(2\pi, 0)\) has an eigenvalue outside the unit circle.  Our goal is to obtain an equivalent stability criterion for the damped system \cref{eq:hill-ode-damped} in terms of the monodromy map \(\Phi(2\pi, 0)\) of the transformed system.  The implicit function method of \cref{sec:application-to-hill-ode} can then be applied to the transformed, undamped system to plot the stability map of the original damped system.  To do this we use the following property of the solution of the damped Hill's equation:

\begin{lemma}
The monodromy map \(\Theta(2\pi, 0)\) of the damped Hill equation~\eqref{eq:hill-ode-damped} is a scalar multiple of a symplectic matrix.  Specifically, \(e^{2\pi \kappa} \Theta(2\pi, 0)\) is symplectic.
\end{lemma}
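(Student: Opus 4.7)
The plan is to exploit the change of variables $z(t) := e^{\kappa t} \theta(t)$ already introduced in \cref{eq:hill-equation-damping-transformation} to write the monodromy map $\Theta(2\pi,0)$ of the damped system explicitly in terms of the monodromy map $\Phi(2\pi,0)$ of the transformed undamped system, and then check that the prefactor $e^{2\pi\kappa}$ is exactly what is needed to make the result symplectic.

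First, I would apply the relation in \cref{eq:hill-equation-damping-transformation} at both $t$ and $t=0$. Since $\Phi(t,0)$ propagates the state of the transformed (undamped) system, I get
\begin{align*}
e^{\kappa t} K \begin{bmatrix} \theta(t) \\ \dot{\theta}(t) \end{bmatrix}
= \begin{bmatrix} z(t) \\ \dot{z}(t) \end{bmatrix}
= \Phi(t,0) \begin{bmatrix} z(0) \\ \dot{z}(0) \end{bmatrix}
= \Phi(t,0)\, K \begin{bmatrix} \theta(0) \\ \dot{\theta}(0) \end{bmatrix}.
\end{align*}
Solving for the damped state and comparing to $\Theta(t,0)\begin{bmatrix}\theta(0)\\\dot\theta(0)\end{bmatrix}$ yields the identity
\begin{align*}
\Theta(t,0) = e^{-\kappa t}\, K^{-1} \Phi(t,0)\, K,
\end{align*}
and in particular $e^{2\pi\kappa}\,\Theta(2\pi,0) = K^{-1}\Phi(2\pi,0)\,K$.

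Second, I would verify each factor on the right-hand side is symplectic. $\Phi(2\pi,0)$ is the monodromy map of the Hamiltonian system \cref{eq:hill-equation-damped-transformed}, so by the same determinant argument given earlier in the paper for $\Theta(t,0)$ in the undamped case, $\Phi(2\pi,0)$ is symplectic. The matrix $K$ from \cref{eq:damping-transformation-matrix} satisfies $\det K = 1$, and since in dimension two a matrix is symplectic iff it has determinant one (the symplectic form is simply the $2\times 2$ area form), $K$ and $K^{-1}$ are symplectic. Symplectic matrices form a group, so the conjugate $K^{-1}\Phi(2\pi,0)K$ is symplectic, completing the proof.

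I do not anticipate a major obstacle: the plan is essentially bookkeeping, and the only content beyond routine calculation is the observation that in two dimensions $\mathrm{Sp}(2,\mathbb{R}) = \mathrm{SL}(2,\mathbb{R})$, which immediately makes $K$ symplectic and explains why the scalar factor $e^{2\pi\kappa}$ is precisely what compensates for the loss of symplecticity due to damping.
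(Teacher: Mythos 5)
Your proposal is correct and follows essentially the same route as the paper: using the transformation $z = e^{\kappa t}\theta$ to obtain $e^{2\pi\kappa}\Theta(2\pi,0) = K^{-1}\Phi(2\pi,0)K$, noting $\det K = 1$ makes $K$ symplectic in dimension two, and invoking symplecticity of $\Phi(2\pi,0)$ together with closure of the symplectic group under products. Your explicit remarks that $\mathrm{Sp}(2,\mathbb{R}) = \mathrm{SL}(2,\mathbb{R})$ and that $\Phi$ is symplectic by the earlier determinant argument are just slightly more detailed justifications of steps the paper states tersely.
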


\begin{proof}
From \cref{eq:hill-equation-damping-transformation}, we can express the monodromy map \(\Phi(2\pi, 0)\) in terms of that of the original damped system:
\begin{align*}
\begin{bmatrix} z(2\pi) \\ \dot{z}(2\pi) \end{bmatrix} & = e^{2\pi \kappa} K \begin{bmatrix} \theta(2\pi) \\ \dot{\theta}(2\pi) \end{bmatrix} \\
& = e^{2\pi \kappa} K\, \Theta(2\pi, 0)\, \begin{bmatrix} \theta(0) \\ \dot{\theta}(0) \end{bmatrix} \\
& = \underbrace{e^{2\pi \kappa} K\, \Theta(2\pi, 0)\, K^{\text{-}1}}_{\Phi(2\pi, 0)} \begin{bmatrix} z(0) \\ \dot{z}(0) \end{bmatrix},
\end{align*} 
so that \(\Phi(2\pi, 0) = e^{2\pi \kappa} K \Theta(2\pi, 0) K^{\text{-}1}\).  We note that \(K\) is \(2 \times 2\) and \(\det K = 1\), so it is symplectic as well.  Thus
\begin{align}
e^{2\pi \kappa} \Theta(2\pi, 0) = K^{\text{-}1} \Phi(2\pi, 0) K \label{eq:hill-ode-damped-monodromy-transform}
\end{align}
is a product of three symplectic matrices and is also symplectic.
\end{proof}

When \(\Theta(2\pi, 0)\) has an eigenvalue at \(\lambda = \pm1\), \(e^{2\pi \kappa} \Theta(2\pi, 0)\) has an eigenvalue at \(e^{2\pi \kappa} \lambda = \pm e^{2\pi \kappa}\).  Since the latter matrix has determinant \(1\), its other eigenvalue is at \(\pm e^{\text{-}2\pi \kappa}\).  Next, we observe that \(e^{2\pi \kappa} \Theta(2\pi, 0)\) and \(\Phi(2\pi, 0)\) are related by a similarity transform \cref{eq:damping-transformation-matrix}, so their eigenvalues and matrix traces are the same.  The instability criterion for \(\theta(t)\) in terms of the transformed~\cref{eq:hill-equation-damped-transformed} is thus
\begin{align}
\left| \tr \left[ \Phi(2\pi, 0) \right] \right| > e^{2\pi \kappa} + e^{\text{-} 2\pi \kappa} = 2 \cosh(2\pi \kappa)
\end{align}
This result is summarized in \cref{tab:damping-transformation-comparison-table}.  We note that \(e^{2\pi \kappa} + e^{\text{-}2\pi \kappa} \ge 2\), with the lower bound achieved in the undamped case of \(\kappa = 0\).

\begin{table}[tb] \label{tab:damping-transformation-comparison-table}
\caption{A summary of the stability criterion for the damped Hill's equation (\cref{eq:hill-ode-damped}) in its original and transformed (\cref{eq:hill-equation-damped-transformed}) form.  Both criteria involve the traces of the monodromy maps of the the systems, and the stability boundary corresponds to different contours.  The two forms are equivalent.}
\begin{tabular}{lcc}
 Hill's equation: & Damped & Transformed (undamped) \\
\toprule
  System & \(  \ddot{\theta} + 2 \kappa \dot{\theta} + \left( a + \epsilon p(t) \right) \theta = 0  \) & \(  \ddot{z} + \left( a - \kappa^2 + \epsilon p(t) \right) z = 0  \) \\
  State & \( \left[  \begin{smallmatrix} \theta & \dot{\theta} \end{smallmatrix}^{\mathrm{T}}  \right] \) & \( \left[  \begin{smallmatrix}z & \dot{z}\end{smallmatrix}  \right]^{\mathrm{T}} \) \\
  Monodromy map & \( \Theta(2\pi,0) \) & \( \Phi(2\pi,0) \) \\
  Stability criterion & \( \left| \tr \left[ \Theta(2\pi, 0) \right] \right| \le 2 \) & \( \left| \tr \left[ \Phi(2\pi, 0) \right] \right| \le 2\cosh(2\pi \kappa) \) \\
\bottomrule
\end{tabular}
\end{table}
\end{document}